\newtheorem{theorem}{Theorem}[section]
\newtheorem{lemma}[theorem]{Lemma}
\newtheorem{corollary}[theorem]{Corollary}
\newtheorem*{theorem*}{Theorem}
\theoremstyle{definition}
\newtheorem*{example}{Example}
\newtheorem*{examplecont}{Example (continued)}
\newtheorem*{remark}{Remark}
\newtheorem*{question}{Question}
\newcommand{\E}{\mathbf{E}}
\newcommand{\glm}{\mathcal{L}}
\newcommand{\degm}{\mathcal{D}}
\newcommand{\adjm}{\mathcal{A}}
\newcommand{\cof}{\operatorname{cof}}
\newcommand{\D}{\Delta}
\newcommand{\1}{\mathbf{1}}
\begin{document}

\title[Determinants, Markov processes, and Kirchhoff's matrix tree
  theorem]{Determinants, their applications to Markov processes, and a
  random walk proof of Kirchhoff's matrix tree theorem}  

\date{\today}

\author{Michael J.~Kozdron}
\address{University of Regina\\
Department of Mathematics and Statistics\\
Regina, SK S4S 0A2 Canada}
\email{kozdron@stat.math.uregina.ca}

\author{Larissa M.~Richards}
\thanks{Research supported in part by the Natural Sciences and Engineering Research Council (NSERC) of Canada.}
\address{University of Regina\\
Department of Mathematics and Statistics\\
Regina, SK S4S 0A2 Canada}
\email{richalar@uregina.ca}

\author{Daniel W.~Stroock}
\address{Massachusetts Institute of Technology\\
Department of Mathematics\\
Cambridge, MA 02139 USA}
\email{dws@math.mit.edu}

\thanks{The research of the first two authors is supported in part by the
  Natural Sciences and Engineering Research Council (NSERC) of Canada: the
  first through a discovery grant and the second through an undergraduate
  student research award.}

\begin{abstract} Kirchhoff's matrix tree theorem is a well-known result
  that gives a formula for the number of spanning trees in a finite, connected graph
  in terms of the graph Laplacian matrix. 
  A closely related result is Wilson's algorithm for putting the uniform distribution on the set of spanning trees. We will show that when one follows Greg Lawler's strategy for proving Wilson's algorithm, Kirchhoff's theorem follows almost immediately after one applies some elementary linear algebra. We also show that the same ideas can be applied to other computations related to general Markov chains and processes on a  finite  state space.\end{abstract}

\maketitle

\section{Introduction}\label{SectIntro}

Markov chains and Markov processes on a finite state space are completely
determined by a matrix.  In the case of chains, it is a transition matrix $P$ whose $(i,j)$ entry specifies the probability
that the chain will go in one step from state $i$ to state $j$.  In the
case of Markov processes, it  is a $Q$-matrix of the form $R(P-I)$, where $R$ is
a diagonal matrix with non-negative entries and $P$ is a transition
 matrix.  The $i$th diagonal entry of $R$ gives the rate
at the process leaves the state $i$ and $P$ gives the distribution of where
it will go when it leaves.

At least in theory, every question that one can ask about the chain or
process can be answered in terms of $P$ or $Q$.  However, in practice, it
is often difficult to write down a transparent expression that gives the
answer.  For example, a quantity of probabilistic interest is the
stationary distribution, that is, a distribution that is left invariant by
the chain or process.
Every Markov chain or process on a finite state space admits at least one
stationary distribution, and sometimes it has many.  A necessary and
sufficient condition for it to have precisely one is that there exist a
point that is accessible from every point.  When one has such a situation,
one would like to have a simple expression for this unique distribution
in terms of $P$ or $Q$, and, as an application of the ideas here, we will
give one.

\begin{remark}  The usual procedure for finding stationary distributions is
  to look for solutions to $\pi P\equiv P^*\pi =\pi $ in the case of Markov
  chains and $\pi Q=Q^*\pi =0$ in the case of Markov processes.  Among other
  places, this procedure is discussed in the books~\cite{KS} and~\cite{Norris}.
\end{remark}

A quite different application of our considerations is to Wilson's
algorithm and Kirchhoff's matrix tree theorem.  Given a finite, connected
graph, there are lots of spanning trees (i.e., subgraphs that contain no
cyles and include all vertices).  In  1847, Gustav Kirchhoff~\cite{Kirchhoff}
gave a formula for the number of spanning trees, and in 1996 David Wilson~\cite{Wilson}
gave an algorithm for generating a spanning tree uniformly at random (without knowing the actual number of spanning trees).
  Kirchhoff's formula expresses the number in terms of the graph Laplacian matrix $\mathcal
L\equiv\mathcal D-\mathcal A$, where $\mathcal D$ is the diagonal matrix whose $i$th diagonal entry
is the degree (i.e., the number of vertices to which it is connected by an
edge) of the vertex $i$ and $\mathcal A$ is the adjacency matrix, the matrix whose
$(i,j)$ entry is $1$ if there is an edge between $i$ and $j$ and is $0$
otherwise.  Obviously, $-\mathcal L=\mathcal D(P-I)$ where $P=\mathcal D^{-1}\mathcal A$ is
a transition  matrix, and so $-\mathcal L$ is a $Q$-matrix.
Wilson's algorithm uses the chain determined by the $P$ to explore the
graph.  Since, with probability $1$, his algorithm produces a spanning tree,
and since the probability of its producing any particular one is the same
for all spanning trees, the number of spanning trees must be equal to the
reciprocal of the probability that Wilson's algorithm produces a
particular one.  Thus, one can recover Kirchhoff's result if one can show
that the probability that Wilson's algorithm produces a particular spanning tree is
the reciprocal of Kirchhoff's expression in terms of $\mathcal L$.  Following
a strategy developed by Greg Lawler, we will show how this can be done.

\begin{remark} 
Even today, Kirchhoff's result has to be considered a sophisticated application of matrix algebra, but in 1847, when matrix algebra was in its infancy, it was a remarkable achievement.  Most modern proofs are based on the Cauchy-Binet formula
and have no obvious connection to probability theory; for an easily accessible
account, see~\cite{JvdB}. Lawler's proof~\cite{LawLimic} that Wilson's algorithm works is very different from
Wilson's own proof~\cite{Wilson}; see also~\cite{Grimmett} for a detailed exposition of Wilson's technique.  Our proof is based on the same idea as Lawler's.
\end{remark}

\begin{example} In order to illustrate the concepts, notation, and proof of the matrix tree theorem via Wilson's algorithm, we will work with the following example. Consider the graph $\Gamma$ shown below having vertex set $\{x_1,x_2,x_3,x_4,x_5,x_6\}$  and graph Laplacian matrix $\glm = \degm-\adjm$ as given.
\begin{center}
\begin{minipage}{\textwidth}
  \centering
  \raisebox{-0.5\height}{\includegraphics[height=1.35in]{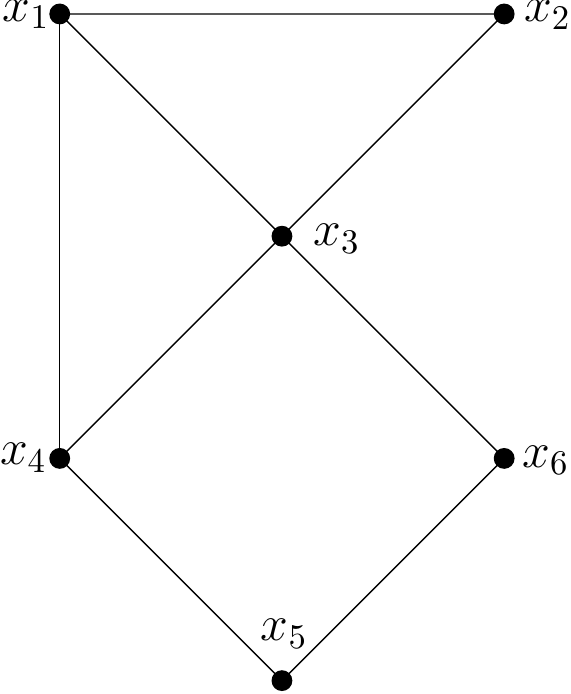}}
  \hspace*{.5in}
$\glm =
\kbordermatrix{
&x_1 &x_2 &x_3  &x_4 &x_5 &x_6 \\
  x_1 &3  &-1 &-1 &-1  &0 &0\\
  x_2 &-1  &2 &-1 &0  &0 &0\\
  x_3 &-1   &-1 &4 &-1  &0 &-1\\
  x_4 &-1 &0 &-1 &3  &-1 &0\\
  x_5 &0   &0 &0 &-1  &2 &-1\\
  x_6 &0 &0 &-1 &0  &-1 &2
}
$
\end{minipage}
\end{center}
It can be easily determined that this graph has 29 spanning trees if one considers the degree of $x_3$ in the spanning tree: $\deg(x_3)=4$ for 2 spanning trees, $\deg(x_3)=3$ for 10 spanning trees, 13 spanning trees have $\deg(x_3)=2$, and 4 spanning trees have $\deg(x_3)=1$.
\end{example}

\section{Two theorems about determinants}\label{SectLinAlg}

In this section, we will be dealing with the vector space ${\mathbb C}^N$ and will be
using the Hermitian inner product $\langle x,y\rangle=x_1\overline{y_1} + \cdots + x_N\overline{y_N}$ and
norm $|x|^2=\langle x,x\rangle$.   Given an $N \times N$ matrix $M\in{\mathbb C}^N\otimes {\mathbb C}^N$
and a subset $\D \subsetneq \{1, \ldots,
N\}$ with $n$ elements, denote by $M^\Delta $ the $(N-n)\times (N-n)$
matrix obtained from $M$ by removing the rows and columns corresponding to the
indices in $\Delta $.  Assuming that $\det[M]\neq0$, Cramer's rule states that
the $(i,i)$ entry of $M^{-1}$ is
\begin{equation}\label{eq1} (M^{-1})_{ii}
  = \frac{\det[M^{\{i\}}]}{\det[M]} \end{equation}
for all $i=1,\ldots,N$.
The following theorem now follows immediately from~\eqref{eq1} by
induction.
 
 \begin{theorem}\label{detthm1} Let $M$ be a non-degenerate $N \times N$
   matrix. Suppose that $(\sigma(1), \ldots, \sigma(N))$ is a permutation
   of $(1,\ldots, N)$. Set $\D_1 = \emptyset$ and, for $j = 2,\ldots, N$,
   let $\D_{j} = \D_{j-1} \cup \{\sigma(j-1)\} = \{\sigma(1), \ldots,
   \sigma(j-1)\}$. If $M^{\D_j}$ is non-degenerate for all $j=1,\ldots, N$,
   then
\begin{equation}\label{eq2}  \prod_{j=1}^n
     (M^{\D_j})^{-1}_{\sigma(j)\sigma(j)} =\det[M]^{-1} .\end{equation} \end{theorem}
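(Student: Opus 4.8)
The plan is to read the product in~\eqref{eq2} as a telescoping product whose individual factors are furnished, one at a time, by Cramer's rule~\eqref{eq1}; this is precisely the inductive mechanism the statement alludes to. The single identity I would establish for each $j$ is
\begin{equation*}
(M^{\D_j})^{-1}_{\sigma(j)\sigma(j)} = \frac{\det[M^{\D_{j+1}}]}{\det[M^{\D_j}]},
\end{equation*}
where I extend the given definitions by setting $\D_{N+1} = \{\sigma(1), \ldots, \sigma(N)\} = \{1, \ldots, N\}$ and interpret the determinant of the resulting empty matrix as $1$.

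To prove this single identity I would first observe that, because $\sigma$ is a permutation and $\D_j = \{\sigma(1), \ldots, \sigma(j-1)\}$, the index $\sigma(j)$ does not lie in $\D_j$; hence it labels a genuine row and column of the matrix $M^{\D_j}$, and the hypothesis that $M^{\D_j}$ is non-degenerate lets me apply~\eqref{eq1} to $M^{\D_j}$ at the diagonal position $\sigma(j)$. The one thing left to verify is the submatrix identity $(M^{\D_j})^{\{\sigma(j)\}} = M^{\D_{j+1}}$, namely that deleting the row and column $\sigma(j)$ from $M^{\D_j}$ is the same as deleting all rows and columns of $M$ indexed by $\D_j \cup \{\sigma(j)\} = \D_{j+1}$. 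Granting this, Cramer's rule delivers the displayed ratio at once.

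Finally I would multiply the single-factor identities over $j = 1, \ldots, N$ and let the product telescope: the numerator $\det[M^{\D_{j+1}}]$ of the $j$th factor cancels the denominator of the $(j+1)$st, leaving $\det[M^{\D_{N+1}}]/\det[M^{\D_1}] = 1/\det[M]$, since $\D_1 = \emptyset$ makes $M^{\D_1} = M$ and the empty matrix contributes determinant $1$. That is exactly~\eqref{eq2}.

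The argument is short and carries no analytic difficulty; its only delicate point is the bookkeeping of indices. I expect the main obstacle to be convincing oneself cleanly that $\sigma(j)$ remains an admissible index of the successively shrunken matrices and that iterated row-and-column deletions compose correctly, so that~\eqref{eq1} may legitimately be invoked at every stage. Both facts follow from the permutation property of $\sigma$ together with the recursive definition of the $\D_j$, while the standing hypothesis that each $M^{\D_j}$ is non-degenerate is precisely what is needed to make every application of Cramer's rule valid.
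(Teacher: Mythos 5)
Your proof is correct and is exactly the argument the paper has in mind: it states that the theorem ``follows immediately from~\eqref{eq1} by induction,'' and your telescoping product of Cramer's rule applied to each $M^{\D_j}$ at the entry $\sigma(j)$ is precisely that induction, with the index bookkeeping spelled out. No discrepancies to report.
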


Obviously, the interest in~\eqref{eq2} is not that it gives an efficient way
of computing $\det[M]^{-1}$, but that it shows that the product on the left
is independent of the permutation $\sigma$. 

\begin{example}
We illustrate how to use the notation of Theorem~\ref{detthm1} to do a
computation. Suppose that $M$ is the non-degenerate $3\times 3$ matrix  
$$M=\kbordermatrix{
      &1  &2 &3\\
1  &9/10  &2/5  &-1/10 \\
2 &1/10 &-2/5  &-9/10 \\
3  &-1/5  &4/5  &-1/5}
\;\;\; \text{so that} \;\;\;
M^{-1} 
=\kbordermatrix{
      &1   &2 &3 \\
1 &1&0 &-1/2  \\
2 &1/4 &-1/4  &1 \\
3  &0 &-1  &-1/2 }.
$$
Observe that $\det[M]^{-1}= (4/5)^{-1}=5/4$. We will now  calculate this
determinant using~\eqref{eq2}. 
Let $\sigma$ be any permutation of $\{1,2,3\}$, say $\{2,3,1\}$, so that
$\D_1=\emptyset$, $\D_2= \{2\}$, $\D_3=\{2,3\}$. We now find $(M^{\D_1})^{-1} = M^{-1}$,
$$
(M^{\D_2})^{-1}
=\left(\kbordermatrix{
     &1  &3\\
1  &9/10   &-1/10 \\
3  &-1/5   &-1/5} \right)^{-1}=
\kbordermatrix{
      &1   &3\\
1  &1  &-1/2  \\
3  &-1  &-9/2 }\;\;\;
\text{and} \;\;\; (M^{\D_3})^{-1}
=\kbordermatrix{
       &1 \\
1  &10/9 } 
$$
and so
$$\prod_{j=1}^3 (M^{\D_j})^{-1}_{\sigma(j)\sigma(j)} = (M^{\D_1})^{-1}_{22} (M^{\D_2})^{-1}_{33}
(M^{\D_3})^{-1}_{11} =-\frac{1}{4}\cdot-\frac{9}{2}\cdot\frac{10}{9} = \frac{5}{4}.$$
\end{example}

The next theorem relies on the Jordan-Chevalley decomposition~\cite{FIS} of a matrix
into its semi-simple and nilpotent parts.  Namely, what we need to know is
that if $\lambda _1,\ldots,\lambda _N$ are the eigenvalues (equivalently,
the roots, counting multiplicity, of the characteristic polynomial) of $M$, then
there exist matrices $B$ and $C$ and a basis $(v_1,\ldots,v_N)$ for ${\mathbb
  C}^N$ such that $M=B+C$ where $C$ is nilpotent (i.e., $C^N=0$), $Bv_i=\lambda _iv_i$
for $1\le i\le N$, and $B$ commutes with $C$. 

\begin{theorem}\label{detthm2}
Suppose that $M$ is an $N \times N$  matrix with eigenvalues $\lambda_1,
\ldots, \lambda_N$, assume that $\lambda _j=0$ if and only if $j=1$, and set
$\Pi_M =\lambda_2\cdots \lambda_N$, the product of the non-zero eigenvalues
of $M$.  Then there is a unique $v_1^*$ such that $M^*v_1^*=0$ and
$\langle v_1,v_1^*\rangle=1$.  Moreover
\begin{equation}\label{eq3}
\frac{\det[M^{\{i\}}]}{\Pi_M} = (v_1)_i  (\overline{v_{1}^*})_i.
\end{equation}\end{theorem}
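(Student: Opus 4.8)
The plan is to first settle the existence and uniqueness of $v_1^*$, then to recognize the right-hand side of~\eqref{eq3} as the diagonal of a rank-one oblique projection onto $\ker M$, and finally to read off~\eqref{eq3} from Cramer's rule~\eqref{eq1} applied to the perturbed matrix $M+\epsilon I$ as $\epsilon\to0$. To begin, I would use the decomposition $M=B+C$ to pin down $\ker M$. The $0$-eigenspace $\ker B$ has dimension equal to the multiplicity of $0$ as an eigenvalue of $M$, namely $1$, and is spanned by $v_1$; since $C$ commutes with $B$ it preserves $\ker B$, and a nilpotent operator on a one-dimensional space is zero, so $Cv_1=0$ and hence $Mv_1=Bv_1+Cv_1=0$. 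Thus $\ker M=\mathbb C v_1$. The matrix $M^*$ has eigenvalues $\overline{\lambda_1},\dots,\overline{\lambda_N}$, so $0$ is simple for $M^*$ as well and $\ker M^*$ is one-dimensional. To produce $v_1^*$ I must rescale a generator $u$ of $\ker M^*$, which is possible precisely when $\langle v_1,u\rangle\neq0$. To see this is nonzero I would invoke $\ker M^*=(\operatorname{Range}M)^\perp$ together with the splitting $\mathbb C^N=\ker M\oplus\operatorname{Range}M$ (valid because $0$ is simple, so the generalized $0$-eigenspace equals $\ker M$): writing any $x=av_1+r$ with $r\in\operatorname{Range}M$ gives $\langle x,u\rangle=a\langle v_1,u\rangle$, so $\langle v_1,u\rangle=0$ would force $u\perp\mathbb C^N$, contradicting $u\neq0$. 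Rescaling then yields a unique $v_1^*$ with $M^*v_1^*=0$ and $\langle v_1,v_1^*\rangle=1$.

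Next I would introduce the matrix $P_0$ with entries $(P_0)_{ij}=(v_1)_i\,\overline{(v_1^*)_j}$, so that $(P_0)_{ii}=(v_1)_i(\overline{v_1^*})_i$ is exactly the right-hand side of~\eqref{eq3}. The point is that $P_0$ is the (oblique) projection onto $\ker M$ along $\operatorname{Range}M$: since $\sum_j\overline{(v_1^*)_j}(v_1)_j=\langle v_1,v_1^*\rangle=1$ one gets $P_0v_1=v_1$, while for $r\in\operatorname{Range}M$ the relation $v_1^*\in(\operatorname{Range}M)^\perp$ gives $(P_0r)_k=(v_1)_k\langle r,v_1^*\rangle=0$.

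Now for small $\epsilon\neq0$ the matrix $M_\epsilon:=M+\epsilon I$ is non-degenerate, with eigenvalues $\lambda_j+\epsilon$ and $\det[M_\epsilon]=\prod_{j=1}^N(\lambda_j+\epsilon)=\epsilon\prod_{j=2}^N(\lambda_j+\epsilon)$. With respect to $\mathbb C^N=\ker M\oplus\operatorname{Range}M$, the operator $M_\epsilon$ acts as $\epsilon I$ on $\ker M$ and as the invertible $M|_{\operatorname{Range}M}+\epsilon I$ on $\operatorname{Range}M$; because $0$ is simple the pole at $\epsilon=0$ is simple, giving
\[
M_\epsilon^{-1}=\tfrac1\epsilon\,P_0+H(\epsilon),
\]
where $H(\epsilon)$ stays bounded as $\epsilon\to0$. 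Reading off the $(i,i)$ entry and combining with~\eqref{eq1},
\[
\frac{\det[M_\epsilon^{\{i\}}]}{\det[M_\epsilon]}=(M_\epsilon^{-1})_{ii}=\tfrac1\epsilon\,(P_0)_{ii}+O(1).
\]
Multiplying through by $\det[M_\epsilon]=\epsilon\prod_{j=2}^N(\lambda_j+\epsilon)$ yields $\det[M_\epsilon^{\{i\}}]=\prod_{j=2}^N(\lambda_j+\epsilon)\,(P_0)_{ii}+O(\epsilon)$; letting $\epsilon\to0$, the left side tends to $\det[M^{\{i\}}]$ by continuity and the right side to $\Pi_M\,(P_0)_{ii}$, which is precisely~\eqref{eq3}.

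The hard part is justifying the displayed resolvent expansion: one must verify that the pole of $M_\epsilon^{-1}$ at $\epsilon=0$ has order exactly $1$ and that its residue is precisely $P_0$. This is exactly where the simplicity of the eigenvalue $0$ is indispensable, since it forces $Cv_1=0$ (no nontrivial nilpotent action on the $0$-eigenspace, hence no higher-order pole) and it delivers the clean splitting $\mathbb C^N=\ker M\oplus\operatorname{Range}M$ that simultaneously identifies the residue and underlies $\langle v_1,v_1^*\rangle\neq0$.
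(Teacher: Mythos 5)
Your proof is correct and follows essentially the same route as the paper's: both perturb $M$ to $M+\epsilon I$, apply Cramer's rule \eqref{eq1}, and identify $\lim_{\epsilon\downarrow 0}\epsilon\,(M+\epsilon I)^{-1}_{ii}$ as $(v_1)_i(\overline{v_1^*})_i$. The only difference is mechanical: the paper computes this residue by expanding the resolvent in the dual basis using the Jordan--Chevalley series, whereas you read it off from the invariant splitting $\mathbb{C}^N=\ker M\oplus\operatorname{Range}M$ as the diagonal of the oblique spectral projection (and, usefully, you make explicit why $\langle v_1,u\rangle\neq0$, a point the paper leaves implicit).
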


\begin{proof}
Let $B$, $C$, and $(v_1, \ldots, v_N)$ be the quanties described above.
Because the kernel of $M$ has the same dimension as that of $M^*$, we know
that $v_1^*$ is uniquely determined by the conditions $M^*v_1^*=0$ and
$\langle v_1,v_1^*\rangle=1$.  Now choose $(v_2,\ldots,v_N)$ so that
$(v_1^*,\dots,v_N^*)$ is the dual basis for $(v_1,\dots,v_N)$ (i.e.,
$(v_i,v_j^*)=\delta _{i,j}$). For $\alpha \notin \{-\lambda_1=0, -\lambda_2, \ldots,
-\lambda_N\}$, 
\begin{equation}\label{nilpinveq1}
(\alpha I + M)^{-1} = (\alpha I +B)^{-1} + \sum_{n=1}^{N-1} (-1)^n(\alpha I +B)^{-(n+1)}C^n.
\end{equation}
To verify this, multiply the right side of~\eqref{nilpinveq1} by
$(\alpha I +B)+C$, expand, and note that necessarily $C^{N}=0$ since $C$
is nilpotent.  Hence, because $Cv_1=Mv_1-Bv_1=0$,
\begin{align*}
(\alpha I + M)^{-1}_{ii} &= \sum_{j'=1}^N  \sum_{j=1}^N   \langle (\alpha
  I + M)^{-1}v_{j'}, v_j^* \rangle (v_j)_i (\overline{v_{j'}^*})_i\\ 
&=\sum_{j=1}^N \frac{ (v_j)_i (\overline{v_{j}^*})_i}{\alpha+\lambda_j} +
  \sum_{n=1}^{N-1} (-1)^n \sum_{j'=2}^N \frac{ (\overline{v_{j'}^*})_i
  }{(\alpha+\lambda_{j'})^{n+1}} \sum_{j=1}^N \langle C^n v_{j'}, v_j^*
  \rangle (v_j)_i.
\end{align*}
If we multiply both sides of the preceding equality by $\alpha$ and let
$\alpha \searrow 0$, then, since  $\lambda_j=0$ iff $j=1$, we see that only
the first term on the right hand side survives; that is, 
$$\lim_{\alpha \downarrow 0} \alpha (\alpha I + M)^{-1}_{ii} = (v_1)_i  (\overline{v_{1}^*})_i.$$
By combining this with~\eqref{eq1}, and observing that
$$\det[\alpha I +M] = \alpha \prod_{j=2}^N (\alpha+ \lambda_j)$$
we get~\eqref{eq3} as required.
\end{proof}

By summing over $i$ in~\eqref{eq3}, we see that
\begin{equation}\label{eq4}
\Pi_M = \sum_{i=1}^N \det[M^{\{i\}}].
\end{equation}
However, this an inefficient way to prove~\eqref{eq4}, which holds without
any assumptions on $M$ other than that $\det[M]=0$.  Indeed, when
$\det[M]=0$, the right side of~\eqref{eq4} is then the constant term in the
polynomial $-\lambda^{-1}\det[M-\lambda I]$.

\begin{remark}
Let $\cof[M]$ be the cofactor matrix for $M$, namely the matrix with
$(i,j)$ entry equal to $(-1)^{i+j}$ times the determinant of the matrix
obtained from $M$ by removing the $i$th column and $j$th row. The same
argument that led to~\eqref{eq3} shows that if $\lambda$ is a simple (i.e., it is different from all the others)
eigenvalue of $M$ with eigenvector $v$, then 
$$\overline{v^*} = \frac{\cof[M-\lambda I] \overline{v}}{|v|^2\Pi_{M-\lambda I}}.$$
In particular, when $M$ is normal and therefore $v^* = |v|^{-2}v$, then $v$ is
an eigenvector for $\overline{M-\lambda I}$ with eigenvalue
$\Pi_{\overline{M-\lambda I}}$. 
\end{remark}

\section{A brief account of Markov chains and processes}\label{SectMark}

As we said, a transition matrix $P$ on a finite state space
$V$ is the key ingredient in the construction of Markov chains.  Namely, a
Markov chain with transition matrix  $P$ is a sequence, best thought of
as a path
$\{X_n:\,n\ge0\}$, of $V$-valued random variables with the property that,
for any $n\ge1$ and $x_0,\ldots,x_n\in V$,
\begin{equation}\label{MP}
\mathbb P\bigl(X_m=x_m\text{ for } 0\le m\le n\bigr)=\mathbb
P\bigl(X_m=x_m\text{ for } 0\le m<n\bigr)P_{x_{n-1}x_n}.\end{equation}
Equivalently, in probabilistic terminology, the conditional probability given $X_0,\ldots,X_{n-1}$ that
$X_n=x_n$  is $P_{X_{n-1},x_n}$.  It is easy to
verify that if one knows the distribution of $X_0$, then the distribution of
$\{X_n:\,n\ge0\}$ is uniquely determined by~\eqref{MP}. We will use the
notation $\mathbb P_x$ to denote the distribution of the chain when $X_0\equiv
x$ and $\mathbb E_x$ to denote expectations with respect to $\mathbb P_x$.
Starting from~\eqref{MP}, it is not hard to show that, for any $n_1$ and $n_2$,
\begin{equation}\label{MP2}\begin{aligned}
\mathbb P&\bigl(X_m=x_m\text{ for } 0\le m\le n_1+n_2\bigr)\\
&\qquad=\mathbb P\bigl(X_m=x_m\text{ for } 0\le m\le n_1\bigr)\mathbb P_{x_{n_1}}
\bigl(X_{n_1+m}=x_m\text{ for } 1\le m\le n_2\bigr)
\end{aligned}\end{equation}

Given a $\Delta \subsetneq V$, let $\xi ^\Delta \equiv\inf\{n\ge0:\,X_n\in\Delta
\}$ be the first time that the chain visits $\Delta $.  Thus $\xi ^\Delta
=\infty $
for paths that never visit $\Delta $.  Say that $\Delta $ is
accessible from $x\notin\Delta $ if there exist an $n\ge1$ and  points $x_0,\ldots,x_n\in
V$ such that $x_0=x$, $x_n\in \Delta $, and $P_{x_{m-1},x_m}>0$ for $1\le
m\le n$. 

\begin{lemma}\label{timelem}  If $\Delta $ is accessible from each
$x\in V$, then $\max_{x\in V}\mathbb E_x[\zeta ^\Delta ]<\infty $.\end{lemma}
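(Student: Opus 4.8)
The plan is to use the finiteness of $V$ to upgrade the pointwise accessibility hypothesis into a uniform lower bound on the probability of reaching $\Delta$ within a fixed number of steps, and then to bootstrap this into a geometric tail bound on the hitting time $\xi^\Delta$ (which I take to be the quantity written $\zeta^\Delta$ in the statement). Once the tail of $\xi^\Delta$ decays geometrically at a rate independent of the starting point, finiteness of the expectation, uniformly in $x$, follows by a routine summation.

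First I would produce a uniform horizon. For each $x\notin\Delta$, accessibility supplies an $n_x\ge1$ and points $x_0=x,x_1,\ldots,x_{n_x}\in\Delta$ with $P_{x_{m-1}x_m}>0$; following exactly this path shows that $\mathbb P_x(\xi^\Delta\le n_x)>0$. Since $V$ is finite, I may set $N=\max_{x\notin\Delta}n_x<\infty$ and $p=\min_{x\in V}\mathbb P_x(\xi^\Delta\le N)$, noting that for $x\in\Delta$ the probability is $1$ because there $\xi^\Delta=0$. As the minimum of finitely many strictly positive numbers, $p>0$.

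Next I would iterate, and this is where I expect the main obstacle to lie, since the argument must be phrased through the Markov property~\eqref{MP2} rather than by an informal appeal to ``restarting'' the chain. On the event $\{\xi^\Delta>kN\}$ one has $X_{kN}\notin\Delta$, so conditioning on the value $X_{kN}=y$ and applying~\eqref{MP2} gives that the conditional probability of failing to reach $\Delta$ in the next $N$ steps equals $\mathbb P_y(\xi^\Delta>N)\le 1-p$; the point is that this one-step bound holds uniformly in $y\notin\Delta$. Summing over $y$ yields $\mathbb P_x(\xi^\Delta>(k+1)N)\le(1-p)\,\mathbb P_x(\xi^\Delta>kN)$, and hence, by induction, $\mathbb P_x(\xi^\Delta>kN)\le(1-p)^k$ for every $k\ge0$ and every $x$.

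Finally I would sum. Writing $\mathbb E_x[\xi^\Delta]=\sum_{n\ge0}\mathbb P_x(\xi^\Delta>n)$ and grouping the terms into consecutive blocks of length $N$, monotonicity of $n\mapsto\mathbb P_x(\xi^\Delta>n)$ together with the geometric bound gives $\mathbb E_x[\xi^\Delta]\le N\sum_{k\ge0}(1-p)^k=N/p$. As this bound does not depend on $x$, taking the maximum over the finite set $V$ shows $\max_{x\in V}\mathbb E_x[\xi^\Delta]\le N/p<\infty$, completing the proof.
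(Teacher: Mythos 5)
Your proof is correct and follows essentially the same route as the paper: a uniform horizon $N$ and uniform success probability $p>0$ obtained from finiteness of $V$, the Markov property~\eqref{MP2} to get the geometric tail bound $\mathbb P_x(\xi^\Delta>kN)\le(1-p)^k$, and summation of the tail. The only difference is that you spell out the final summation step ($\mathbb E_x[\xi^\Delta]\le N/p$) which the paper leaves implicit.
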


\begin{proof}  By the accessibility assumption, we know that, for each
  $x\in V$, there is an $n$ and an $\theta \in(0,1)$ such that $\mathbb P_x(\xi
  ^\Delta >n)\le \theta $, and because $V$ is finite, we can choose one $n$
  and $\theta $ that works simultaneously for all $x\in V$.  Hence, by~\eqref{MP2},
$$\mathbb P_x\bigl(\xi^\Delta >(k+1)n\bigr)=\sum_{y\in V}\mathbb
  P_x\bigl(\xi ^\Delta >kn\;\&\;X_{kn}=y\bigr)\mathbb P_y\bigl(\xi ^\Delta >n\bigr)
\le \theta \; \mathbb P_x\bigl(\xi ^\Delta >kn\bigr).$$
By induction, this means that $\mathbb P_x(\xi ^\Delta >kn)\le\theta ^k$, and
so the asserted result follows.\end{proof}

Continuing under the conditions in Lemma~\ref{timelem}, our next goal is to show
that $(I-P)^\Delta $ is invertible  and that 
\begin{equation}\label{chaintime}\begin{gathered}
\E_x\left[\sum_{n=0}^{\xi ^\Delta }\1_{\{y\}}(X_n)\right]
=\bigl((I-P)^\Delta \bigr)^{-1}_{xy}\;\text{ for }\; x,y\in V\setminus \Delta .\end{gathered}\end{equation}
Perhaps the most elementary way to check this is to first observe that
$$\E_x\left[\sum_{n=0}^{\xi ^\Delta }\1_{\{y\}}(X_n)\right]=\sum_{n=0}^\infty
\mathbb P_x\bigl(X_n=y\;\&\;\xi ^\Delta >n\bigr).$$
Second, note that, because $y\notin\Delta $, we have $\mathbb
P_x\bigl(X_n=y\;\&\;\xi ^\Delta >n\bigr)= 
\mathbb P_x\bigl(X_{n\wedge \xi ^\Delta }=y\bigr)$.  Finally, $\{X_{n\wedge
  \xi^\Delta }:\,n\ge 0\}$ is the chain starting at $x$ determined that
  the transition matrix $P'$ whose $(x',y')$ entry equals $P_{x'y'}$ if
  $x'\in V\setminus D$ and equals $\delta _{x',y'}$ if $x'\in \Delta $.
  Hence, by~\eqref{MP}, $\mathbb P_x\bigl(X_{n\wedge \xi ^\Delta
  }=y\bigr)=(P')^n_{x,y}$, and it is an easy matter to see that
  $(P')^n_{xy}=(P^\Delta )^n_{xy}$.  Thus, we now know that
$$\E_x\left[\sum_{n=0}^{\xi ^\Delta
    }\1_{\{y\}}(X_n)\right]=\sum_{n=0}^\infty (P^\Delta )^n_{xy}.$$
In particular, since the left hand side is dominated by $\E_x[\xi ^\Delta
]$, the series on the right converges.  Finally, knowing that this series
converges, an elementary argument shows that 
$$(I-P^\Delta )\sum_{n=0}^\infty (P^\Delta )^n
=I $$
and therefore that $I+ P^\Delta +(P^\Delta )^2+ \cdots $
is the inverse of $(I-P)^\Delta$.

A closely related consideration is the following.  Given $x\in V$, define
$\{\tau _x^{(m)}:\,m\ge0\}$ inductively so that $\tau _x^{(0)}=0$ and $\tau
^{(m)}_x=\inf\{n>\tau _x^{(m-1)}:\,X_n=x\}$.  If $x\in V\setminus D$, then
it is clear that $\tau _x^{(m)}<\xi ^\Delta $ if and only if
$$\sum_{n=0}^{\xi ^\Delta }\1_{\{x\}}(X_n)>m.$$ 
 At the same time, using~\eqref{MP2}, one can check that $\mathbb P_x(\tau^{(m)_x}<\xi ^\Delta )=
\mathbb P_x(\tau_x^{(m-1)}<\xi ^\Delta )\mathbb P_x(\tau_x^{(1)}<\xi
^\Delta )$, and so $\mathbb P_x(\tau_x^{(m)}<\xi ^\Delta )=\mathbb P_x(\tau
^{(1)}_x<\xi ^\Delta )^m$.  Combining these, we arrive at
\begin{equation}\label{returneq}
\bigl((I-P)^\Delta \bigr)^{-1}_{xx}=\E_x\left[\sum_{n=0}^{\xi ^\Delta    }\1_{\{y\}}(X_n)\right]=\frac1{1-r_\Delta (x)}
\;\;\; \text{where} \;\;\; r_\Delta (x)\equiv\mathbb P_x(\tau ^{(1)}_x<\xi ^\Delta ).
\end{equation}
Analogous results hold for the Markov processes associated with a
$Q$-matrix.  Indeed, if $Q=R(P-I)$ and $\{X_n:\,n\ge0\}$ is a Markov chain
with transition matrix $P$ and initial distribution $\mu $, then a 
Markov process $\{X(t):\,t\ge0\}$ determined by $Q$ with initial
distribution is obtained from $\{X_n:\,n\ge0\}$ by randomizing the time that
it stays at a point.  More precisely, one can always choose $R$ and $P$ so
that $P_{xx}=0$ for all $x\in V$.  With this choice, the unparameterized
paths of $\{X(t):\,t\ge0\}$ are exactly the same as those of
$\{X_n:\,n\ge0\}$.  The difference is that, instead of remaining in a state
for time $1$, if $X(s)=x$, then it stays at $x$ for an exponential holding
time with rate constant $R_x$.  That is, the probability has not left $x$
before some time $t>s$ is $e^{-(t-s)R_x}$.  On the other hand, when it
leaves a point, the distribution of where it goes is exactly the same as
that for the chain.

Fortunately, we do not need to know much about these processes.
Based on the preceding description, one can show that the
probability that the process started from $x$ will be at $y$ at time $t$ is
$(e^{tQ})_{xy}$, where
$$e^{tQ}\equiv\sum_{k=0}^\infty \frac{t^nQ^n}{n!}.$$
Starting from this, it is easy to see that  $\pi $ is a stationary
distribution for the process if and only if $\pi Q\equiv Q^*\pi =0$.   

\section{Computing stationary distributions for Markov chains and processes}\label{SectMP} 

Let $P$ be a transition matrix on the finite state space $V$.  A
point $y$ is said to be accessible from $x$ if $\{y\}$ is accessible from
$x$, and $x$ is said to communicate with $y$ if each is accessible from
the other.  It should be clear that communication is an equivalence
relation, and, for a given $x\in V$, we use $[x]$ to denote the communication class of
$x$ (i.e., the set of $y$ that communicate with $x$). Further, one says
that $x$ is recurrent or transient depending 
on whether $\mathbb P_x(\tau _x<\infty )=1$ or $\mathbb P(\tau_x<\infty)<1$.

The following facts are not hard and their proofs can be found in \S\,3.1 of~\cite{DWSMP}.  If
$x$ is recurrent and $y$ is accessible from $x$, then $y$ is also recurrent
and $x$ communicates with $y$.  Thus, no transient state is accessible from
any recurrent one. From~\eqref{returneq} with $\Delta =\emptyset $, one sees that if $x$
is transient, then the expected length of time that the chain starting at
$x$ spends at $x$ is finite.  Thus, at least one state in $V$ must be
recurrent, and every transient state must have a recurrent state to which it
is accessible.  

We next need to know that there always exists at least one stationary
distribution.  

\begin{lemma}\label{statlem}  If $P$ is a transition matrix and $x$ is a recurrent
  state, then $\mathbb E_x[\tau _x]<\infty $.  In addition, if, for $y\in V$,
$$\mu _y=\frac{1}{\mathbb E_x[\tau _x]} \mathbb E_x\left[\sum_{m=0}^{\tau
        _x-1}\1_{\{y\}}(X_m)\right],$$
then $\mu $ is a stationary distribution for $P$, and $\mu _y=0$ for
$y\notin [x]$.\end{lemma}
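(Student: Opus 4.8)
The plan is to prove the three assertions in turn: finiteness of $\mathbb E_x[\tau_x]$, stationarity of $\mu$, and the support statement. For the finiteness, the essential point is that recurrence forces the chain started at $x$ to stay inside the finite set $[x]$. Indeed, any state accessible from the recurrent state $x$ communicates with $x$ by the facts recalled at the start of Section~\ref{SectMP}, so $[x]$ is closed under $P$ and in particular $P_{x,y}>0$ implies $y\in[x]$. Restricting $P$ to $[x]$ produces a transition matrix on a finite state space for which $\{x\}$ is accessible from every point, by the definition of the communication class. I would then apply Lemma~\ref{timelem} to this restricted chain with $\Delta=\{x\}$ to get $\max_{y\in[x]}\mathbb E_y[\xi^{\{x\}}]<\infty$; since a path started in $[x]$ never leaves $[x]$, this hitting time of $x$ agrees with the one for the full chain. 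Decomposing the return time according to the first step and using the Markov property \eqref{MP2} gives $\mathbb E_x[\tau_x]=1+\sum_{y}P_{x,y}\,\mathbb E_y[\xi^{\{x\}}]$, a finite sum of finite terms, hence finite.

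For stationarity I would work with the unnormalized occupation measure $\nu_y=\mathbb E_x\bigl[\sum_{m=0}^{\tau_x-1}\1_{\{y\}}(X_m)\bigr]$, so that $\mu_y=\nu_y/\mathbb E_x[\tau_x]$. Summing over $y$ collapses the inner sum to $\tau_x$, which shows the total mass of $\nu$ is $\mathbb E_x[\tau_x]$ and hence that $\mu$ is a probability vector; moreover $\nu_x=1$, since the only visit to $x$ strictly before $\tau_x$ occurs at time $0$. Rewriting $\nu_y=\sum_{m\ge0}\mathbb P_x(X_m=y,\ \tau_x>m)$ and multiplying by $P_{y,z}$, the Markov property turns each term into $\mathbb P_x(X_m=y,\ X_{m+1}=z,\ \tau_x>m)$, the point being that the event $\{\tau_x>m\}$ is determined by $X_0,\dots,X_m$. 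Summing on $y$ and reindexing yields $\sum_y\nu_y P_{y,z}=\sum_{m\ge1}\mathbb P_x(X_m=z,\ \tau_x\ge m)$. I would then split into cases: when $z\ne x$ the constraint $X_m=z$ already rules out a return at time $m$, so $\{\tau_x\ge m\}$ and $\{\tau_x>m\}$ coincide on this event and the sum equals $\nu_z$; when $z=x$ the event $\{X_m=x,\ \tau_x\ge m\}$ is exactly $\{\tau_x=m\}$, so the sum equals $\mathbb P_x(\tau_x<\infty)=1=\nu_x$, which is precisely where recurrence enters. This gives $\nu P=\nu$ and therefore $\mu P=\mu$.

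For the support, if $y\notin[x]$ then $y$ is not accessible from $x$, since otherwise recurrence would force $y\in[x]$; consequently every term $\mathbb P_x(X_m=y,\ \tau_x>m)$ in $\nu_y$ vanishes and $\mu_y=0$. I expect the genuine obstacle to be the finiteness in the first part rather than the stationarity computation: the identity $\nu P=\nu$ is a careful but routine manipulation of the Markov property, whereas $\mathbb E_x[\tau_x]<\infty$ truly relies on the finiteness of $V$ (a recurrent state in an infinite chain may have infinite mean return time), which is what licenses the reduction to the closed class $[x]$ and the use of Lemma~\ref{timelem}. The one bookkeeping subtlety to keep straight throughout is the distinction between $\{\tau_x>m\}$ and $\{\tau_x\ge m\}$ under reindexing, as that is exactly where the unit mass at $z=x$ is produced.
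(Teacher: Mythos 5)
Your proof is correct and follows essentially the same route as the paper: reduce to the closed communication class $[x]$, get $\mathbb E_x[\tau_x]<\infty$ from Lemma~\ref{timelem}, and verify stationarity of the occupation measure $\nu$ via the Markov property, with $y\notin[x]$ handled by non-accessibility. The only cosmetic difference is in the stationarity computation: the paper shifts the sum to $\sum_{m=1}^{\tau_x}$ (using $X_0=X_{\tau_x}=x$) to avoid a case split, whereas you treat $z=x$ and $z\neq x$ separately and observe that the unit mass at $x$ comes from $\mathbb P_x(\tau_x<\infty)=1$; both are valid.
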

 
\begin{proof}  Since $P_{x'y}=0$ for $x'\in[x]$ and $y\notin[x]$, without
  loss in generality we may and will assume that $V=[x]$. By~\eqref{returneq}, $\mathbb E_x[\tau _x]<\infty $, and so it is clear
that $\mu $ is a probability distribution.  To prove that $\mu P=\mu $,
observe that
\begin{align*}
\mathbb E_x[\tau _x]\mu _y=\mathbb
E_x\left[\sum_{m=0}^{\tau_x-1}\1_{\{y\}}(X_m)\right] 
&=\mathbb E_x\left[\sum_{m=1}^{\tau_x}\1_{\{y\}}(X_m)\right] \\
&=\sum_{m=1}^\infty  \mathbb P_x\bigl(X_m=y\;\&\;\tau _x\ge m\bigr)\\
 &=\sum_{m=1}^\infty\sum_{z\in V}\mathbb P_x\bigl(X_{m-1}=z\;\&\;\tau _x>m-1\;\&\;X_m=y\bigr)\\
&=\sum_{z\in V}\sum_{m=0}^\infty \mathbb P_x\bigl(X_m=z\;\&\;\tau _x>m\bigr)P_{zy}\\
&=\sum_{z\in V}\mathbb E_z\left[\sum_{m=0}^{\tau_x-1}\1_{\{y\}}(X_m)\right]P_{zy}\\
&=\mathbb E_x[\tau _x](\mu P)_y,
\end{align*}
where, in the passage to the last line, we have used~\eqref{MP}.\end{proof}

We now have everything that we need to prove the following application of
Theorem~\ref{detthm2}.

\begin{theorem}\label{MPthm1}  If $Q=R(P-I)$ is a $Q$-matrix, where $R$ is
  a diagonal matrix with positive diagonal entries and $P$ is a transition
  matrix, then $\det(-Q^{\{x\}})\ge0$ for all $x\in V$.  Moreover, if
  the null space of $Q$ is one dimensional, then $\det(-Q^{\{x\}})>0$ if and
    only if $x$ is recurrent for $P$, and therefore
    $\rm{dim}\bigl(\rm{Null}(Q)\bigr)=1$ if and only if $\Pi_{-Q}>0$.
    Finally, if $\rm{dim}\bigl(\rm{Null}(Q)\bigr)=1$ and
\begin{equation}\label{pi} \pi _x=\frac{\det(-Q^{\{x\}})}{\Lambda _{-Q}}\quad\text{for }
x\in V,\end{equation}
then $\pi $ is the unique solution to $\mu Q=0$ satisfying $\langle \1,\mu
\rangle=1$. \end{theorem}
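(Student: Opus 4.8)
The plan is to reduce everything to the real matrix $I-P$ and read off the signs of the relevant minors from the spectral theory of substochastic matrices, reserving the clean input of Theorem~\ref{detthm2} for the final formula. Since $R$ is diagonal with positive diagonal entries, deleting the row and column indexed by $x$ commutes with the factorization $-Q=R(I-P)$, so $(-Q)^{\{x\}}=R^{\{x\}}(I-P)^{\{x\}}$ and hence $\det(-Q^{\{x\}})=\bigl(\prod_{y\neq x}R_y\bigr)\det\bigl((I-P)^{\{x\}}\bigr)$ with the first factor strictly positive. Thus all three sign statements are governed by $\det(I-P^{\{x\}})$, where $P^{\{x\}}$ is the substochastic matrix of the chain killed at $x$. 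For the first assertion I would note that $\|P^{\{x\}}\|_\infty\le 1$ forces every eigenvalue $\mu$ to satisfy $|\mu|\le 1$; writing $\det(I-P^{\{x\}})=\prod_\mu(1-\mu)$ and pairing complex-conjugate eigenvalues (which contribute $|1-\mu|^2\ge0$) while noting that real eigenvalues lie in $[-1,1]$ (so $1-\mu\ge0$) gives $\det(-Q^{\{x\}})\ge0$.

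For the recurrence characterization I would argue that $\det(-Q^{\{x\}})>0$ exactly when $I-P^{\{x\}}$ is invertible. One direction is already in hand: if $\{x\}$ is accessible from every state, Lemma~\ref{timelem} and the discussion culminating in~\eqref{chaintime} show $(I-P)^{\{x\}}$ is invertible, whence the determinant is nonzero and therefore positive by the previous paragraph. Conversely, if some state cannot reach $x$, the states that never reach $x$ form a nonempty closed set containing a recurrent class inside $V\setminus\{x\}$, whose stochastic sub-block has spectral radius $1$; then $1$ is an eigenvalue of $P^{\{x\}}$ and the determinant vanishes. Under the hypothesis $\dim\mathrm{Null}(Q)=1$ there is a single recurrent class $C$: distinct recurrent classes produce stationary distributions with disjoint supports (Lemma~\ref{statlem}), hence linearly independent elements of $\mathrm{Null}(Q^*)$, so that $\dim\mathrm{Null}(Q^*)$ is at least the number of classes; since $\dim\mathrm{Null}(Q^*)=\dim\mathrm{Null}(Q)=1$ and at least one recurrent class always exists, there is exactly one. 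Then $\{x\}$ is accessible from every state precisely when $x\in C$, i.e.\ precisely when $x$ is recurrent, which is the second assertion.

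For the third assertion I would first record that $0$ is a semisimple eigenvalue of $Q$: because $e^{tQ}$ is stochastic (its rows sum to $\1$ since $Q\1=0$, and its entries are probabilities), the family $\{e^{tQ}:t\ge0\}$ is bounded, which rules out a Jordan block at $0$ and makes the algebraic and geometric multiplicities of the eigenvalue $0$ coincide. Since $\det(-Q)=0$, formula~\eqref{eq4} gives $\Pi_{-Q}=\sum_x\det(-Q^{\{x\}})$, and by the computation in the remark following~\eqref{eq4} this common value is the constant term of $-\lambda^{-1}\det(-Q-\lambda I)$, namely the product of the nonzero eigenvalues when $0$ is simple and $0$ when $0$ has algebraic multiplicity at least two. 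Hence if $\dim\mathrm{Null}(Q)=1$ the sum is strictly positive (the term for any recurrent $x$ is positive by the second assertion), while if $\dim\mathrm{Null}(Q)\ge2$ semisimplicity makes the algebraic multiplicity at least two and $\Pi_{-Q}=0$; this is the stated equivalence.

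Finally, assuming $\dim\mathrm{Null}(Q)=1$, the formula~\eqref{pi} is an immediate application of Theorem~\ref{detthm2} to $M=-Q$, whose only zero eigenvalue is simple by the above. Taking $v_1=\1$ (since $Q\1=0$) and letting $v_1^*$ be the real vector with $Q^*v_1^*=0$ and $\langle\1,v_1^*\rangle=1$, identity~\eqref{eq3} reads $\det(-Q^{\{i\}})/\Pi_{-Q}=(v_1)_i\overline{(v_1^*)_i}=(v_1^*)_i$, so the right-hand side of~\eqref{pi} (with $\Lambda_{-Q}=\Pi_{-Q}$) is exactly $v_1^*$. This $\pi:=v_1^*$ satisfies $\pi Q=0$ and $\langle\1,\pi\rangle=1$, has nonnegative entries by the first and third assertions, and sums to $1$ by~\eqref{eq4}, so it is a stationary distribution; it is the unique solution of $\mu Q=0$ with $\langle\1,\mu\rangle=1$ because $\mathrm{Null}(Q^*)$ is one-dimensional. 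I expect the main obstacle to be the second paragraph together with the semisimplicity input: pinning down exactly when $\det(I-P^{\{x\}})$ is nonzero in terms of accessibility and recurrence, and justifying that $\dim\mathrm{Null}(Q)=1$ may be read as algebraic simplicity of the eigenvalue $0$. Once these are secured, the determinant formula drops out of~\eqref{eq3}.
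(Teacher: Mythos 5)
Your proposal is correct, and the overall architecture (reduce to $I-P$ via the diagonal factor $R$, use Lemma~\ref{timelem} plus Theorem~\ref{detthm1} to get positivity at recurrent states, and apply Theorem~\ref{detthm2} with $v_1=\1$ for the final formula) matches the paper's. But you diverge from the paper at two substantive points, in ways worth recording. For the nonnegativity of $\det(-Q^{\{x\}})$ you pair conjugate eigenvalues of the real substochastic matrix $P^{\{x\}}$ and use that its spectral radius is at most $1$; the paper instead applies Theorem~\ref{detthm1} to $\alpha I+(I-P)$ for $\alpha>0$, where the Neumann series makes every diagonal entry of the inverse of every principal submatrix visibly positive, and then lets $\alpha\searrow 0$. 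For the vanishing of $\det(-Q^{\{x\}})$ at transient $x$, you exhibit a closed set of states that cannot reach $x$, extract a recurrent class inside it, and read off a singular block of $I-P^{\{x\}}$ from the resulting block-triangular structure; the paper instead argues probabilistically that $n\pi_x\le \mathbb E_\pi\bigl[\sum_{m=0}^\infty \1_{\{x\}}(X_m)\bigr]<\infty$ for transient $x$, forcing $\pi_x=0$ and hence $\det(-Q^{\{x\}})=0$. Your route is the more purely linear-algebraic one and gives the sharper intermediate statement that $\det(-Q^{\{x\}})>0$ precisely when $x$ is accessible from every state (with no hypothesis on the null space); the paper's route stays within the occupation-time machinery it has already built and needs no Perron--Frobenius-flavoured input. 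Your semisimplicity argument via boundedness of $e^{tQ}$ parallels the paper's own remark (which uses Doeblin's theorem) rather than its proof; note that for the direction $\dim\mathrm{Null}(Q)\ge 2\Rightarrow\Pi_{-Q}=0$ it is not actually needed, since geometric multiplicity never exceeds algebraic multiplicity, while for the converse direction the positivity of $\sum_x\det(-Q^{\{x\}})$ already forces algebraic simplicity via~\eqref{eq4}, which is how the paper gets it for free. Finally, you are right that $\Lambda_{-Q}$ in~\eqref{pi} should read $\Pi_{-Q}$.
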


\begin{proof}  To prove that $\det(-Q^{\{x\}})$ is non-negative for all $x$
  and is positive when $x$ is recurrent for $P$, it suffices to handle the
  case when $R$ is the identity.  To that end, set $M=I-P$ and note that
  for any $\Delta \subsetneq V$ and $\alpha >0$, 
$$\bigl((\alpha I+M)^\Delta \bigr)^{-1}=\sum_{n=0}^\infty \alpha ^{-n-1}(P^\Delta )^n.$$
Hence $\bigl((\alpha I+M)^\Delta \bigr)^{-1}_{xx}>0$ for all $x\in
V\setminus \Delta $, and so by Theorem~\ref{detthm1}, $\det\bigl((\alpha
I+M)^{\{x\}}\bigr)>0$.  After letting $\alpha \searrow0$, it follows that
$\det(M^{\{x\}})\ge0$.

Next assume that $\rm{Null}(P-I)$ is one dimensional.  Then
$\rm{Null}\bigl((P-I)^*\bigr)$ is also one dimensional, and so there is only
one stationary distribution for the Markov chain determined by $P$.  Hence,
by Lemma~\ref{statlem} and the preceding discussion, every recurrent state must communicate with
every other one and be accessible from every transient state.  Moreover, if
$x$ is a recurrent state and $x\in\Delta \subsetneq V$, then Lemma~\ref{timelem} says that $(M^\Delta )^{-1}_{yy}<\infty $ for all $y\in
V\setminus \Delta $, and therefore, again by Theorem~\ref{detthm1},
$\det(M^{\{x\}})>0$.  In addition, since $\det(M^{\{x\}})\ge0$ for all $x$
and there must exist a recurrent $x$, it follows from~\eqref{eq4} that $\Pi _{-Q}>0$.
Finally, because $Q\1=0$, Theorem~\ref{detthm2} says that the right hand
side of~\eqref{pi} is the unique $\mu $ satisfying $\mu Q=0$ and $\langle
\1,\mu \rangle=1$.

What remains is to prove that $\det(-Q^{\{x\}})=0$ if $\Pi _{-Q}>0$ and $x$
is transient, and again it suffices to handle the case when $R=I$. 
But if $\Pi _{-Q}>0$, then we know that the $\pi $ in~\eqref{pi} satisfies
$\pi P=\pi $.  Thus, for any $x$ and $n\ge1$,
$$n\pi _x=\sum_{m=0}^{n-1}(\pi P^m)_x=\mathbb E_\pi
\left[\sum_{m=0}^{n-1}\1_{\{x\}}(X_m)\right],$$
where $\mathbb E_\pi $ denotes expectation with respect to the distribution
$\mathbb P_\pi $ of the Markov chain determined by $P$ with initial
distribution $\pi $.  Hence, 
$$n\pi _x\le\mathbb E_\pi \left[\sum_{m=0}^\infty \1_{\{x\}}(X_m)\right]$$
for all $n\ge1$.  Further, by~\eqref{MP2}, 
\begin{align*}
\mathbb E_\pi \left[\sum_{m=0}^\infty \1_{\{x\}}(X_m)\right]=\mathbb E_\pi
\left[\sum_{m=\xi ^{\{x\}}}^\infty \1_{\{x\}}(X_m)\right] 
&=\sum_{k=0}^\infty \sum_{m\ge k}^\infty \mathbb P_\pi \bigl(X_m=x\;\&\;\xi ^{\{x\}}=k\bigr)\\
&=\sum_{k=0}^\infty \sum_{m\ge k}\mathbb P_\pi (\xi ^{\{x\}}=k)\,\mathbb P_x(X_{m-k}=x)\\
&=\left(\sum_{k=0}^\infty \mathbb P_\pi (\xi ^{\{x\}}=k)\right)\mathbb
E_x\left[\sum_{m=0}^\infty\1_{\{x\}}(X_m)\right] \\
&=\mathbb P_\pi (\xi ^\Delta <\infty )\,\mathbb E_x\left[\sum_{m=0}^\infty \1_{\{x\}}(X_m)\right].
\end{align*}
Thus, if $x$ is transient and therefore 
$$\mathbb E_x\left[\sum_{m=0}^\infty
\1_{\{x\}}(X_m)\right]<\infty,$$ then $\pi _x$ and therefore
$\det(-Q^{\{x\}})$ must be $0$.\end{proof}  

\begin{remark}
Once one knows that there is some state that is accessible from every other
state, another proof that $\Pi _{-Q}>0$ can be based on the following
argument. Under these circumstances, Doeblin's theorem
(Theorem~2.2.1 of~\cite{DWSMP}) implies that 
$$\lim_{\alpha\downarrow 0} \alpha (\alpha I-Q)^{-1}v = \langle v, \pi \rangle$$
for all $v$. Now suppose that $0$ were not a simple eigenvalue of $Q$.
Then there would exist a $v \neq 0$ such that  $\langle v, \pi \rangle=0$
and $Q^nv=0$ for some $n \ge 1$. But this would mean that 
$$(\alpha I - Q)^{-1}v = \sum_{m=0}^n \alpha^{-m-1}Q^mv$$
and therefore that $Q^mv=0$ for all $m\ge 1$, from which is follows that 
$$v = \lim_{\alpha \downarrow 0}\alpha(\alpha I -Q)^{-1}v = \langle v, \pi \rangle=0.$$
Hence, $0$ must be a simple eigenvalue of $Q$. Furthermore,  because it is real, its non-real
eigenvalues come in conjugate pairs. Finally, if $-Q v = \lambda v$, then  
$e^{tQ}v=e^{-\lambda t}v$ and so, since $\|e^{tQ}v\|_{\rm u} \le
\|v\|_{\rm u}$, the real part of $\lambda$ must be non-negative. (Here $\|
\cdot \|_{\rm u}$ denotes the uniform norm.)
In particular, if $\lambda$ is real and different from $0$, it must be strictly positive. 
\end{remark}

\begin{corollary}\label{cor1} Assume that $Q = P-I$ where $P$ is a
  transition matrix with the property that the only solutions $v$ to
  $Pv=v$ are constant multiples of $1$.  Then, for any recurrent $x$,
\begin{equation}\label{coreq1}
\mathbb P_x(\tau _x \le \tau _y) = \frac{\det\bigl((I-
P)^{\{y\}}\bigr)}{\det\bigl((I-P)^{\{x,y\}}\bigr)}. 
\end{equation}
\end{corollary}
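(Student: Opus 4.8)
The plan is to read the right-hand side of~\eqref{coreq1} backwards through Cramer's rule and then interpret the resulting quantity probabilistically via~\eqref{returneq}. First I would apply~\eqref{eq1} to the matrix $M=(I-P)^{\{y\}}$ at the diagonal index $x$. Deleting the row and column indexed by $x$ from $(I-P)^{\{y\}}$ produces exactly $(I-P)^{\{x,y\}}$, so Cramer's rule gives
\begin{equation*}
\bigl((I-P)^{\{y\}}\bigr)^{-1}_{xx}=\frac{\det\bigl((I-P)^{\{x,y\}}\bigr)}{\det\bigl((I-P)^{\{y\}}\bigr)}.
\end{equation*}
Hence the right-hand side of~\eqref{coreq1} is simply the reciprocal of $\bigl((I-P)^{\{y\}}\bigr)^{-1}_{xx}$, and the whole problem collapses to computing one diagonal entry of the inverse of $(I-P)^{\{y\}}$.

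Before invoking~\eqref{returneq} I must check its hypotheses. The assumption that the only solutions of $Pv=v$ are multiples of $\1$ says precisely that $\operatorname{Null}(I-P)$ is one dimensional, so Theorem~\ref{MPthm1} applies: there is a single recurrent communication class, it is accessible from every transient state, and $\det\bigl((I-P)^{\{z\}}\bigr)>0$ exactly when $z$ is recurrent. When $y$ is recurrent it communicates with the recurrent state $x$ and is therefore accessible from every state, so $\Delta=\{y\}$ meets the hypothesis of Lemma~\ref{timelem}; this is what makes $(I-P)^{\{y\}}$ invertible and legitimizes both the Cramer step and the geometric series behind~\eqref{returneq}.

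With this secured I would apply~\eqref{returneq} with $\Delta=\{y\}$. Since $x\neq y$, starting from $x$ we have $\xi^{\{y\}}=\tau_y$, so
\begin{equation*}
\bigl((I-P)^{\{y\}}\bigr)^{-1}_{xx}=\frac{1}{1-r_{\{y\}}(x)},\qquad r_{\{y\}}(x)=\mathbb P_x\bigl(\tau_x<\tau_y\bigr),
\end{equation*}
and therefore the right-hand side of~\eqref{coreq1} equals $1-r_{\{y\}}(x)=\mathbb P_x(\tau_y<\tau_x)$, the probability that the chain reaches $y$ before returning to $x$. The final step is to reconcile this escape probability with the event recorded on the left; because $x\neq y$ the two times are never equal, so $\{\tau_x\le\tau_y\}$ and $\{\tau_y<\tau_x\}$ partition the sample space (up to a null set, using recurrence of $x$), and I would track the strict-versus-weak inequalities carefully here.

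The step I expect to be the genuine obstacle is the degenerate case in which $y$ is transient. Then $y$ is not accessible from the recurrent state $x$, Lemma~\ref{timelem} no longer applies to $\Delta=\{y\}$, and $\det\bigl((I-P)^{\{y\}}\bigr)=0$ by Theorem~\ref{MPthm1}, so~\eqref{returneq} must be bypassed. Here I would argue directly: the numerator vanishes, while the denominator $\det\bigl((I-P)^{\{x,y\}}\bigr)$ stays positive because $\{x,y\}$ (containing the recurrent $x$) is accessible from every other state, so Lemma~\ref{timelem} and the positivity argument of Theorem~\ref{MPthm1} apply to $\Delta=\{x,y\}$. The ratio then returns the value dictated by the probabilistic side. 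Keeping the two accessibility regimes and the inequality conventions consistent is, I expect, the only delicate part; the linear-algebraic core is immediate from~\eqref{eq1} and~\eqref{returneq}.
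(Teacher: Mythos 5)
Your argument is correct, and it takes a genuinely shorter route than the paper's. The paper proves the identity by way of the stationary distribution: it invokes Lemma~\ref{statlem} to write $\pi_y/\pi_x$ as the expected occupation time of $y$ during an excursion from $x$, identifies that ratio with $\det\bigl((I-P)^{\{y\}}\bigr)/\det\bigl((I-P)^{\{x\}}\bigr)$ via Theorem~\ref{MPthm1}, factors the occupation time as $\mathbb P_x(\tau_y\le\tau_x)\,\mathbb E_y\bigl[\sum_{m=0}^{\tau_x-1}\1_{\{y\}}(X_m)\bigr]$, and then evaluates the last factor by~\eqref{returneq} and~\eqref{eq1} applied to $\Delta=\{x\}$. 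You instead apply~\eqref{eq1} and~\eqref{returneq} directly to $\Delta=\{y\}$, which bypasses the stationary measure entirely. What the paper's detour buys is uniformity in $y$: the only matrix it needs to invert is $(I-P)^{\{x\}}$, which is nonsingular because $x$ is recurrent, whereas your route must, as you correctly recognize, split off the case of transient $y$, where $(I-P)^{\{y\}}$ is singular. Your treatment of that case is right: the numerator vanishes by Theorem~\ref{MPthm1}, the denominator is positive since $\{x,y\}$ contains a recurrent state (so Lemma~\ref{timelem} and Theorem~\ref{detthm1} give positivity as in the proof of Theorem~\ref{MPthm1}), and probabilistically $y$ is then inaccessible from $x$, so both sides are $0$.

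The ``strict-versus-weak'' point you flag at the end is not a gap in your argument but a typo in the corollary as printed. Your computation yields $1-r_{\{y\}}(x)=\mathbb P_x(\tau_y\le\tau_x)$, and this is exactly what the paper's own proof derives (its key display reads $\mathbb E_x[\cdots]=\mathbb P_x(\tau_y\le\tau_x)\,\mathbb E_y[\cdots]$) and what its three-state example confirms, since there $\mathbb P_1(\tau_2<\tau_1)=D(2,3,1)/(1-p_{33})$ with $D(2,3,1)=\det\bigl((I-P)^{\{2\}}\bigr)$ and $1-p_{33}=\det\bigl((I-P)^{\{1,2\}}\bigr)$. The left-hand side of~\eqref{coreq1} should read $\mathbb P_x(\tau_y\le\tau_x)$; with that correction your proof is complete as it stands.
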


\begin{proof}  By Lemma~\ref{statlem}, one expression for the unique
  stationary distribution $\pi $ is
$$\pi _y=\frac{1}{\mathbb E_x[\tau _x]} \mathbb E_x\left[\sum_{m=0}^{\tau
        _x-1}\1_{\{y\}}(X_m)\right].$$
Hence, since 
$$\mathbb E_x\left[\sum_{m=0}^{\tau_x-1}\1_{\{x\}}(X_m)\right]=1,$$
we find
$$\frac{\pi _y}{\pi_x}=\mathbb E_x\left[\sum_{m=0}^{\tau_x-1}\1_{\{y\}}(X_m)\right].$$
By Theorem~\ref{MPthm1}, $\pi _y/\pi _x=\det\bigl((I-P)^{\{y\}}\bigr) / \det\bigl((I-P)^{\{x\}}\bigr)$.  At the same time, proceeding as in the
proof of Lemma~\ref{statlem}, one sees that
$$\mathbb E_x\left[\sum_{m=0}^{\tau _x-1}\1_{\{y\}}(X_m)\right]
=\mathbb P_x(\tau _y\le \tau _x)\mathbb E_y\left[\sum_{m=0}^{\tau
        _x-1}\1_{\{y\}}(X_m)\right].$$
Finally, by~\eqref{returneq} and~\eqref{eq1},
$$\mathbb E_y\left[\sum_{m=0}^{\tau
    _x-1}\1_{\{y\}}(X_m)\right]=\bigl((I-P)^{\{x\}}\bigr)^{-1}_{yy}=
\frac{\det\bigl((I-P)^{\{x,y\}}\bigr)}{\det\bigl((I-P)^{\{x\}}\bigr)}.$$
After combining these, we arrive at the asserted equation.\end{proof}

\begin{remark}  As a consequence of~\eqref{coreq1}, we see that
  $\det\bigl((I-P)^{\{y\}}\bigr)\le \det\bigl((I-P)^{\{x,y\}}\bigr)$ when
  $x$ is recurrent and $y\in V$.
This observation has the following generalization.  Namely, 
  given any transition matrix $P$ and $\Delta \subsetneq V$ and $y\in V\setminus \Delta $,
one has  that $\det\bigl((I-P)^{\Delta }\bigr)\le\det\bigl((I-P)^{\Delta
    \cup\{y\}}\bigr)$.  To see this, first observe that  
the argument with which we proved in
  Theorem~\ref{MPthm1} that $\det\bigl((I-P)^{\{x\}}\bigr)\ge0$ can be used
  to show that $\det\bigl((I-P)^{\Delta }\bigr)\ge0$ for any $\Delta
  \subseteq V$.  Thus there is nothing to do when $\Delta \subsetneq V$ and
  $\det\bigl((I-P)^{\Delta }\bigr)=0$.  On the other hand, if
  $\det\bigl((I-P)^{\Delta }\bigr)>0$, then one can use~\eqref{returneq}
  together with~\eqref{eq1} to identify the ratio of
  $\det\bigl((I-P)^{\Delta \cup\{y\}}\bigr)$ to $\det\bigl((I-P)^{\Delta
  }\bigr)$ as 
  $$\mathbb E_y\left[\sum_{m=0}^{\xi ^\Delta
    }\1_{\{y\}}(X_m)\right]\ge1.$$ By induction, this means that $\det\bigl((I-P)^{\Delta _1}\bigr)
\le \det\bigl((I-P)^{\Delta _2}\bigr)$ when $\Delta _1\subseteq\Delta _2$.\end{remark} 

In the case of a three-state Markov chain, it is easy to perform the calculations in Theorem~\ref{MPthm1} and Corollary~\ref{cor1}.

\begin{example}  Let $P=\bigl(p_{i,j}\bigr)_{1\le i,j\le 3}$ be a
  transition matrix. If we set
$$D(i,j,k)=p_{ji}(1-p_{kk})+p_{jk}p_{ki},$$
then
$$\det\bigl((I-P)^{\{1\}}\bigr)=D(1,2,3),\quad
  \det\bigl((I-P)^{\{2\}}\bigr)=D(2,3,1),\quad\text{and }
  \det\bigl((I-P)^{\{3\}}\bigr)=D(3,1,2).$$
Hence, by Theorem~\ref{detthm1}, $P$ has a unique stationary distribution
$\pi $ if and
only if
$$\Pi \equiv D(1,2,3)+D(2,3,1)+D(3.1,2)>0,$$
in which case,
$$\pi _1=\frac{D(1,2,3)}{\Pi },\quad
\pi _2=\frac{D(2,3,1)}{\Pi },\quad\text{and }
\pi _3=\frac{D(3,1,2)}{\Pi }.$$
Furthermore, if $D(1,2,3)>0$, then
$$\mathbb P_1(\tau _2<\tau _1)=\frac{D(2,3,1)}{1-p_{33}}\quad\text{and} \quad
\mathbb P_1(\tau _3<\tau _1)=\frac{D(3,1,2)}{1-p_{22}},$$
and similarly for $\mathbb P_2$ and $\mathbb P_3$.
\end{example}

\section{Wilson's Algorithm and Kirchhoff's Formula}
 
Let $\Gamma =(V,E)$ be a connected $N$-vertex graph in which no vertex has an edge to
itself and any two vertices are connected by at most one edge.  Recall that the graph Laplacian
matrix $\mathcal L=\mathcal D-\mathcal A$, where $\mathcal A$ is the
adjacency matrix for $\Gamma $ and $\mathcal D$ is the diagonal matrix of
degrees.  Think of $-\mathcal L$ as a $Q$-matrix.  Clearly, the
connectedness of $\Gamma $ implies that all the vertices communicate with
one another and are therefore all recurrent.  Hence, by 
Theorem~\ref{MPthm1}, $\det\bigl(\mathcal L^\Delta \bigr)>0$ for all
non-empty $\Delta \subsetneq V$.  In addition, if $(x_1,\dots,x_N)$ is
any ordering of the vertices, then, by Theorem~\ref{detthm1},
\begin{equation}\label{Lprod}\frac1{\det\bigl(\mathcal L^{\{x_1\}}\bigr)}=\prod_{m=1}^{N-1}
\bigl(\mathcal L^{\{x_1,\dots,x_m\}}\bigr)^{-1}_{x_{m+1}x_{m+1}}.\end{equation}
At the same time, since $\pi \mathcal
L=0$ when $\pi _x=1/N$ for all $x\in V$, Theorem~\ref{MPthm1} says that
$$\det\bigl(\mathcal L^{\{x\}}\bigr)=\frac{\Pi _{\mathcal L}}N
\quad\text{for all } x\in V.$$
By combining these, we see that, for any ordering 
$(x_1,\ldots,x_N)$ of the elements of $V$,
\begin{equation}\label{Kirk}
\prod_{m=1}^{N-1}\bigl(\mathcal L^{\{x_1,\dots,x_m\}}\bigr)^{-1}_{x_{m+1}x_{m+1}}=
\frac1{\det\bigl(\mathcal L^{\{x_1\}}\bigr)}=\frac N{\Pi _{\mathcal L}}.\end{equation}

In order to explain the relevance of the preceding to Wilson's algorithm,
we have to explain what his algorithm is.  
Set $P=\mathcal D^{-1}\mathcal A$.  Given an ordering $(x_1,\ldots,x_N)$ of the
vertices, take $\Delta _1=\{x_N\}$ and run a Markov chain $\{X_n:\,n\ge1\}$
with transition matrix $P$ starting from $x_1$.  Consider the segment
$\{X_n:\,0\le n\le\xi ^{\Delta _1}\}$, and let $(Y_{1,1},\dots,Y_{1,K_1})$
be the successive points visited by the path obtained from this segment by
erasing all of its loops 
(i.e., cycles).  Note that both $K_1$ and  $Y_{1,k}$, $1 \le k \le K_1$, will be random.  If $\{Y_{1,1},\dots,Y_{1,K_1}\} =V$,
then $(Y_{1,1},\dots,Y_{1,K_1})$ is a spanning tree with a single branch
running from $x_1$ to $x_N$, in which case the algorithm
terminates.  If $K_1<N$, set $\Delta _2=\{Y_{1,1},\dots,Y_{1,K_1}\}$, and
take $x_{2,1}$ to be the first vertex from $(x_1,\dots,x_N)$
that is not in $\Delta _2$.  Run the Markov chain starting from $x_{2,1}$,
consider the segment $\{X_n:\,0\le n\le \xi ^{\Delta _2}\}$, and let $(Y_{2,1},\dots,
Y_{2,K_2})$ be the successive vertices visits by its loop erasure.  If
$\{Y_{1,1},\dots,Y_{1,K_1}\} \cup \{Y_{2,1},\dots,Y_{2,K_2}\} =V$, then again the algorithm stops and
$\{Y_{1,1},\dots,Y_{1,K_1}\}\cup\{Y_{2,1},\dots,Y_{2,K_2}\}$ are the
vertices of a spanning tree that has two branches if $Y_{2,K_2}=Y_{1,K_1}$ and three
branches otherwise.  One continues running the algorithm in this way until it produces a
spanning tree.  Since $\mathbb P_x(\xi ^\Delta <\infty )=1$ for all $x\in
V$ and $\Delta \subsetneq V$, with probability $1$ it will produce a tree
after no more than $N$ runs.

Wilson's theorem says that the probability of his algorithm producing any
particular spanning tree is the same for all spanning trees.  In view of
the preceding description, proving his theorem comes down to the
following computation.  Let $\emptyset \neq\Delta \subsetneq V$ be given, and assume that $(y_1,\dots,y_K)$ is a given
ordering of $K$ distinct vertices from $V\setminus \Delta$.

\begin{question} What is the probability $\mathcal P^\Delta (y_1,\dots,y_K)$ that $(y_1,\dots,y_K)$ will
be the successive points visited by  the loop erasure of the segment $\{X_n:\,0\le n\le\xi ^\Delta \}$ of the
Markov chain started at $y_1$?  
\end{question}

If one knows the answer, then one knows how to compute the
probability $\mathcal P(\mathcal T)$ that Wilson's algorithm
 produces the spanning tree $\mathcal T$. Indeed, suppose that the branch structure of $\mathcal T$ is
$$(y_{1,1},\dots,y_{1,K_1})\cdots(y_{L,1},\dots,y_{L,K_L}),$$ where
\begin{enumerate}
\item for each $1\le \ell\le L$, the vertices $y_{\ell,1}, \cdots, y_{\ell,K_\ell}$ are distinct,
  and 
  $$V=\{y_{\ell,k}:\,1\le\ell\le L\text{ and } 1\le k\le K_L\},$$
\item and for each $2\le \ell\le L$,
$$\{y_{\ell,1},\ldots,y_{\ell,K_{\ell}}\}\cap\bigcup_{j=1}^{\ell-1}\{y_{j,1},\ldots,y_{j,K_{j}}\}
=\{y_{\ell,K_\ell}\}.$$
\end{enumerate}
Then
$$\mathcal P(\mathcal T) =\prod_{\ell=1}^L\mathcal P^{\Delta _\ell}(y_{\ell,1},\dots,y_{\ell, K_\ell}),$$
where $\Delta _{1}=\{y_{1,K_1}\}$ and $\Delta _{\ell}=\Delta
_{\ell-1}\cup\{y_{\ell,1},\dots,y_{\ell,K_\ell}\}$ for $2\le \ell\le
L$.

 Hence, if we can show that
$$\mathcal P^\Delta (y_1,\dots,y_K)=\prod_{k=1}^{K-1}\bigl(\mathcal
L^{\Delta \cup\{y_j:\,1\le j<k\}}\bigr)^{-1}_{y_{k}y_{k}},$$
then we will know that
\begin{equation}\label{Lawler}
\mathcal P(\mathcal T)= \prod_{\ell=1}^L
\prod_{k=1}^{K_\ell-1}\bigl(\mathcal
L^{\Delta_\ell \cup\{y_{\ell,j}:\,1\le j<k\}}\bigr)^{-1}_{y_{\ell,k}y_{\ell,k}},
\end{equation}
which in conjunction with~\eqref{Kirk} would mean that
\begin{equation}\label{Wilson}
\mathcal P(\mathcal T)=\frac1{\det(\mathcal L^{\{y_{1,K_1}\}})}=\frac N{\Pi
  _{\mathcal L}}.\end{equation}

The equality in~\eqref{Lawler} was proved by Lawler.  To derive it, for $\mathbf{m}=(m_1,\dots,m_{K-1})\in\mathbb{N}^K$, let $\sigma ^{(\mathbf{m})}_1$ be the time of
the $m_1$th visit of $\{X_{n}:\,n\ge 0\}$ to $y_1$, and, for
$2\le k<K$, let $\sigma ^{(\mathbf{m})}_k$ be the time of the $m_k$th visit
of $\{X_n:\,n\ge \sigma ^{(\mathbf{m})}_{k-1}\}$ to $y_k$.  Also, define ${\sigma '}
^{(\mathbf{m})}_1$ to be time of the $(m_1+1)$th visit of $\{X_{n}:\,n\ge0\}$ to
$y_1$ and ${\sigma'}^{(\mathbf{m})}_k$ to be the time of the $(m_k+1)$th visit of
$\{X_n:\,n\ge \sigma ^{(\mathbf{m})}_{k-1}\}$ to $y_k$.  Then
$$\mathcal P^\Delta (y_1,\dots,y_K)=\sum_{\mathbf{m}\in\mathbb{N}^{K-1}}\mathbb P_{y_1}(B^{(\mathbf{m})}_{K-1}),$$
where, for any $1\le k<K$,
$$B_k^{(\mathbf{m})}\equiv\bigl\{\sigma ^{(\mathbf{m})}_j<\zeta ^\Delta <{\sigma '}^{(\mathbf{m})}_j\;\&\;
X_{\sigma ^{(\mathbf{m})}_j+1}=y_{j+1}\text{ for } 1\le j\le k\bigr\}.$$

To compute $\mathbb P_{y_1}(B_k^{\mathbf{m}})$, define $\zeta ^{(\mathbf{m})}_1=\zeta ^\Delta
$ and, for $1\le k< K$, let $\zeta ^{(\mathbf{m})}_k$ be the time of the
first visit of $\{X_n:\,n\ge\sigma ^{(\mathbf{m})}_{k-1}\}$ to $\Delta _k\equiv\Delta
\cup\{y_1,\dots,y_{k-1}\}$.  Then 
$$B_k^{(\mathbf{m})}=\bigl\{\sigma ^{(\mathbf{m})}_j<\zeta ^{(\mathbf{m})}_j\;\&\;X_{\sigma ^{(\mathbf{m})}_j+1}=y_{j+1}\text{ for } 1\le j\le k\bigr\}.$$
Using~\eqref{MP2}, one can show that $\mathbb P_{y_1}(B_1)=d_{y_1}^{-1}\mathbb P_{y_1}(\tau
^{(m_1)}_{y_1}<\zeta ^\Delta )$, where $\tau^{(m)}_x$ is the time of the
$m$th visit to $x$ by $\{X_n:\,n\ge0\}$.  For $2\le k\le K$, note that
$$B_k^{(\mathbf{m})}=B^{(\mathbf{m})}_{k-1}\cap\bigl\{\sigma ^{(\mathbf{m})}_k<\zeta ^{(\mathbf{m})}_k\;\&\;
X_{\sigma ^{(\mathbf{m})}_k+1}=y_{k+1}\bigr\},$$
and again use~\eqref{MP2} to see that
$$\mathbb P_{y_1}(B^{(\mathbf{m})}_k)=d_{y_k}^{-1}\mathbb P_{y_k}\bigl(\tau 
^{(m_k)}_{y_k}<\zeta ^{\Delta _k}\bigr) 
\mathbb P_{y_1}\bigl(B^{(\mathbf{m})}_{k-1}\bigr).$$
By induction, this shows that
$$\mathbb P_{y_1}\bigl(B^{(\mathbf{m})}_K\bigr)=\prod_{k=1}^K\frac{\mathbb P_{y_k}\bigl(\tau
^{(m_k)}_{y_k}<\zeta ^{\Delta _k}\bigr)}{d_{y_k}},$$
and therefore, after summing over $\mathbf{m}\in\mathbb{N}^K$ and applying~\eqref{returneq},
we have that
$$\mathcal P^\Delta (y_1,\dots,y_K)=\prod_{k=1}^{K-1}
\frac{\bigl((I-P)^{\Delta _k}\bigr)^{-1}_{y_{k+1},y_{k+1}}}{d_{y_k}}=
\prod_{k=1}^{K-1}\bigl(\mathcal L^{\Delta _k}\bigr)^{-1}_{y_{k+1},y_{k+1}}$$

The equation~\eqref{Wilson} contains a proof that Wilson's algorithm yields the
uniform distribution on the spanning trees.  In addition, since $\mathbb
P(\mathcal T)$ is the reciprocal of the number of spanning trees,
Kirchhoff's formula follows at once as well, namely,
\begin{equation}\label{Kirkform}
\text{the number of spanning trees in $\Gamma $}=\det\bigl(\mathcal L^{\{x\}}\bigr)=
\frac{\Pi _{\mathcal L}}N.\end{equation}

\begin{examplecont}
Suppose that we implement Wilson's algorithm to generate a spanning tree of
$\Gamma$. We start with our original graph as shown in the picture on the
left having vertex set  
$\{x_1,x_2,x_3,x_4,x_5,x_6\}$. Start a simple random walk at $x_1$ and stop
it when it first reaches $x_6$. Assume the loop-erasure of this path is
$[x_1, x_3, x_6]$,  and add this branch to the spanning tree. For the
second branch, since $x_2$ is the first vertex in $V$ not visited in the
first branch, start a random walk at $x_2$ and stop it when it reaches 
$\{x_1, x_3, x_6\}$. Assume that the loop-erasure of this path is
$[x_2,x_3]$ and add this branch to the spanning tree.  Finally, start a
simple random walk at $x_4$ and stop it when it reaches  $\{x_1, x_3, x_6\}
\cup \{x_2,x_3\}$. Assume that the loop-erasure of this path is
$[x_4,x_5,x_6]$, and add this third branch to the spanning tree. 
   This completes the construction of the spanning tree shown below on the
   right.  In the general notation from the proof above, we have
  $[y_{1,1}, y_{1,2},y_{1,3}] = [x_1, x_3, x_6]$, $[y_{2,1},y_{2,2}] =
 [x_2,x_3]$,  $[y_{3,1},y_{3,2},y_{3,3}] = [x_4,x_5,x_6]$, and $\Delta_1 =
 \{y_{1,K_1}\}=\{y_{1,3}\}=\{x_6\}$. 
We will now check that the probability that Wilson's algorithm produces
this spanning tree is $1/29$, which is the reciprocal of the number of
spanning trees of $\Gamma$. 
\begin{center}
\begin{minipage}{\textwidth}
  \centering
  \raisebox{-0.5\height}{\includegraphics[height=1.35in]{newfig1.pdf}}
  \hspace*{.3in}
  \raisebox{-0.5\height}{\includegraphics[height=1.35in]{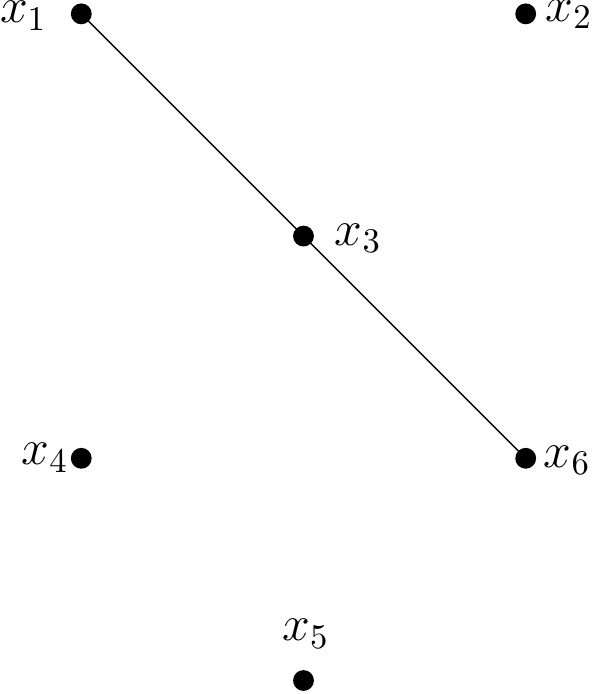}}
  \hspace*{.3in}
  \raisebox{-0.5\height}{\includegraphics[height=1.35in]{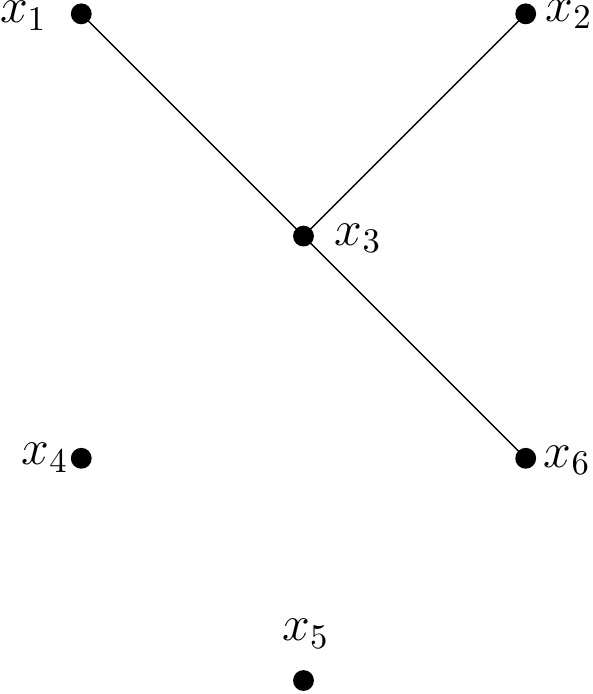}}
  \hspace*{.3in}
  \raisebox{-0.5\height}{\includegraphics[height=1.35in]{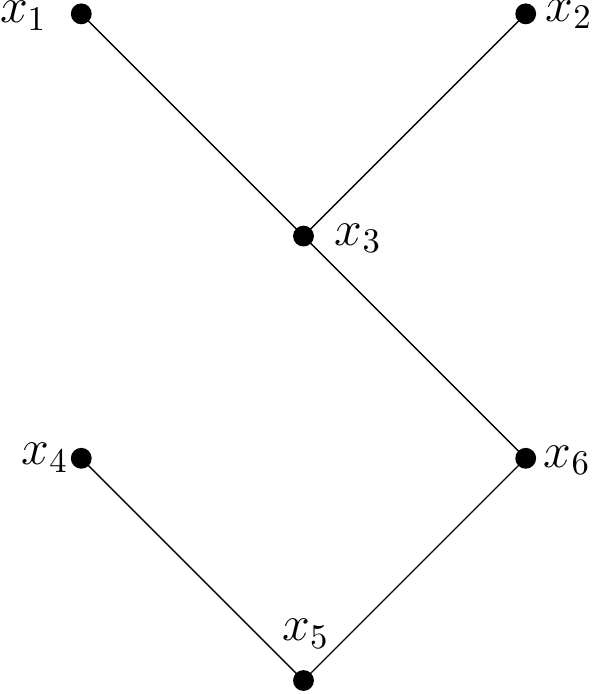}}
\end{minipage}
\end{center}
We know from~\eqref{Wilson} that the probability of this particular spanning tree being generated is
$$\mathcal{P}(\mathcal T) = \frac{1}{\det[ \glm^{\{x_6\}}]} = \frac{\det\left[\bigl((I-P)^{\{x_6\}}\bigr)^{-1}\right] }{\det \left[ \degm^{\{x_6\}}\right]}
=\det\left[\bigl((I-P)^{\{x_6\}}\bigr)^{-1}\right] \prod_{j=1}^5 \frac{1}{d_{x_j}} .
$$
Since the order that the remaining vertices were added to the spanning is $\{x_1,x_3, x_2,x_4,x_5\}$,  we know from Theorem~\ref{detthm1} that
\begin{align*}
\det\left[\bigl((I-P)^{\{x_6\}}\bigr)^{-1}\right] = 
\bigl((I-P)^{\{x_6\}}\bigr)^{-1}_{x_1x_1} &\cdot \bigl((I-P)^{\{x_6,x_1\}}\bigr)^{-1}_{x_3x_3}
\cdot \bigl((I-P)^{\{x_6,x_1,x_3\}}\bigr)^{-1}_{x_2x_2} \cdot \\
&\;\;\cdot \bigl((I-P)^{\{x_6,x_1,x_3,x_2\}}\bigr)^{-1}_{x_4x_4}
\cdot \bigl((I-P)^{\{x_6,x_1,x_3,x_2,x_4\}}\bigr)^{-1}_{x_5x_5}.
\end{align*}
The transition matrix for simple random walk on the graph $\Gamma$ is
$$P=
\kbordermatrix{
&x_1 &x_2 &x_3  &x_4 &x_5 &x_6 \\
  x_1 &0  &1/3 &1/3 &1/3  &0 &0\\
  x_2 &1/2  &0 &1/2 &0  &0 &0\\
  x_3 &1/4   &1/4 &0 &1/4  &0 &1/4\\
  x_4 &1/3 &0 &1/3 &0  &1/3 &0\\
  x_5 &0   &0 &0 &1/2  &0 &1/2\\
  x_6 &0 &0 &1/2 &0  &1/2 &0
},$$
and so we find  $\bigl((I-P)^{\{x_6\}}\bigr)^{-1}$ equals
$$\left(\kbordermatrix{
&x_1 &x_2 &x_3  &x_4 &x_5\\
  x_1 &1  &-1/3 &-1/3 &-1/3  &0\\
  x_2 &-1/2  &1 &-1/2 &0  &0\\
  x_3 &-1/4   &-1/4 &1 &-1/4  &0\\
  x_4 &-1/3 &0 &-1/3 &1  &-1/3\\
  x_5 &0   &0 &0 &-1/2  &1
}\right)^{-1}
\!=\kbordermatrix{
&x_1 &x_2 &x_3  &x_4 &x_5\\
  x_1 &93/29  &50/29   &76/29 &60/29  &20/29\\
  x_2 &75/29   &74/29  &80/29 &54/29  &18/29\\
  x_3 &57/29   &40/29   &84/29 &48/29  &16/29\\
  x_4 &60/29      &36/29   &64/29 &78/29  &26/29\\
  x_5 &30/29      &18/29 &32/29 &39/29  &42/29}.$$
Moreover, $\bigl((I-P)^{\{x_6,x_1\}}\bigr)^{-1}$ equals
$$\left(\kbordermatrix{
&x_2 &x_3  &x_4 &x_5\\
  x_2 &1 &-1/2 &0  &0\\
  x_3 &-1/4 &1 &-1/4  &0\\
  x_4 &0 &-1/3 &1  &-1/3\\
  x_5 &0 &0 &-1/2  &1}\right)^{-1}
=\kbordermatrix{
&x_2 &x_3  &x_4 &x_5\\
  x_2 &36/31 &20/31 &6/31  &2/31\\
  x_3 &10/31 &40/31 &12/31  &4/31\\
  x_4 &4/31   &16/31 &42/31  &14/31\\
  x_5 &2/31   &8/31   &21/31  &38/31},
$$
$$\bigl((I-P)^{\{x_6,x_1,x_3\}}\bigr)^{-1}=
\left(\kbordermatrix{
&x_2   &x_4 &x_5\\
  x_2 &1  &0  &0\\
  x_4 &0  &1  &-1/3\\
  x_5 &0 &-1/2  &1
  }\right)^{-1}
=\kbordermatrix{
&x_2   &x_4 &x_5\\
  x_2 &1  &0  &0\\
  x_4 &0  &6/5  &2/5\\
  x_5 &0 &3/5  &6/5},
$$
$$\bigl((I-P)^{\{x_6,x_1,x_3,x_2\}}\bigr)^{-1}
=\kbordermatrix{
   &x_4 &x_5\\
  x_4  &6/5  &2/5\\
  x_5 &3/5  &6/5}, \;\;\; \text{and} \;\;\;\bigl((I-P)^{\{x_6,x_1,x_3,x_2,x_4\}}\bigr)^{-1}=\kbordermatrix{
          &x_5 \\
x_5   &1}, $$
so that
$$\det\left[\bigl((I-P)^{\{x_6\}}\bigr)^{-1}\right] = \frac{93}{29} \cdot
\frac{40}{31} \cdot 1 \cdot \frac{6}{5} \cdot 1  = \frac{144}{29}.$$ 
Hence,
$$\mathcal{P}(\mathcal T) 
=\frac{144}{29} \cdot \frac{1}{3}\cdot \frac{1}{2}\cdot \frac{1}{4}\cdot
\frac{1}{3}\cdot \frac{1}{2} 
=\frac{1}{29}.$$
But we already knew that there are 29 spanning trees of $\Gamma$, so we
have verified in this case that Wilson's algorithm does, in fact, produce a
spanning tree uniformly at random. 
\end{examplecont}

\section{Cayley's theorem}\label{SectCayley}

If $\Gamma=(V,E)$ is the complete graph on $N+1$ vertices so that there is an edge connecting each vertex to every other, then Cayley's theorem states that the number of spanning trees of $\Gamma$ is $(N+1)^{N-1}$. This formula was first discovered  in 1860 by Carl Wilhelm Borchardt, although it is now universally named after Arthur Cayley~\cite{Cayley} who extended Borchardt's original results. Of course, Cayley's theorem is easily derived from Kirchhoff's matrix tree theorem by computing $\det[\glm]$ using elementary column operations to bring the matrix into lower triangular form.

The easiest way to prove Cayley's theorem via Wilson's algorithm is to use~\eqref{returneq}.
Start a simple random walk at $x$, and suppose that $\D \subseteq V\setminus\{x\}$ is any nonempty collection of vertices  with $|\D|=m$.
Recall that $r_\D(x)$ is the probability that simple random walk starting at $x$ returns to $x$ before entering $\D$. Let $r_{\D}(x;k)$ be the probability that simple random walk starting at $x$ returns to $x$ in exactly $k$ steps without entering $\D$ so that
$$r_\D(x) = \sum_{k=2}^\infty r_\D(x;k)$$
because it takes the simple random walk at least 2 steps to return to its starting point.
Since each vertex has an edge to every other vertex, we have partitioned the vertex set into three pieces, namely $V_1=\{x\}$, $V_2=\D$ which has cardinality $m$, and $V_3$ which has cardinality $N-m$. Thus, the probability that simple random walk starting at $x$ returns to $x$ in exactly $k$ steps without entering $\D$ is
$$r_\D(x;k) = \mathbb P_{x}\{ S_1\in V_3, S_2\in V_3, \ldots, S_{k-1}\in V_3, S_k = x\} = \frac{N-m}{N}\left(\frac{N-1-m}{N}\right)^{k-2} \frac{1}{N}$$
and so
$$r_\D(x) = \frac{(N-m)}{N^2}\sum_{k=2}^\infty \left(\frac{N-1-m}{N}\right)^{k-2}  = \frac{N-m}{N(m+1)}.$$
By~\eqref{returneq},
\begin{equation}\label{cayleyeq1}
\bigl((I-P)^\Delta \bigr)^{-1}_{xx}= \frac{1}{1-r_\D(x)} = \frac{N(m+1)}{m(N+1)}.
\end{equation}
Suppose that we now label the vertices of $\Gamma$ as $V=\{y_1,\ldots, y_{N+1}\}$, and set  $\D_j=\{y_1, \ldots, y_{j}\}$ for $j=1,\ldots, N$. Since $|\D_j|=j$, we have from Theorem~\ref{detthm1} combined with~\eqref{cayleyeq1},
$$\det\left[\bigl((I-P)^{\{y_1\}}\bigr)^{-1}\right] =\prod_{j=1}^{N} \frac{N(j+1)}{j(N+1)} = \frac{N^N(N+1)!}{(N+1)^{N}N!}= \frac{N^N}{(N+1)^{N-1}}.$$
Since each of the $(N+1)$ vertices has degree  $N$, we conclude that the number of spanning trees of the complete graph on $N+1$ vertices is
$$\det\bigl(\glm^{\{y_1\}}\bigr) = \frac{\det\bigl(\degm^{\{y_1\}}\bigr)}{\det\left[\bigl((I-P)^{\{y_1\}}\bigr)^{-1}\right]} = \frac{N^N}{\frac{N^N}{(N+1)^{N-1}}} = (N+1)^{N-1}.$$

\begin{remark}
 Lyons and Peres~\cite{LP} use Wilson's algorithm to prove Cayley's theorem.  However, their proof of Wilson's algorithm is via cycle-popping and so their derivation is quite different than ours. Moreover, they do not extend their proof of  Cayley's theorem to a proof of Kirchhoff's matrix tree theorem. 
 \end{remark}

\section*{Acknowledgements}
The first author learned that Lawler's proof of Wilson's algorithm could be used to deduce Kirchhoff's matrix tree theorem during a talk that Greg Lawler gave at
the Mathematical Sciences Research Institute (MSRI) in Berkeley during Spring 2012, and so he would like to thank both MSRI for the invitation to spend a semester at the institute, and Greg Lawler for explaining his proof of Wilson's algorithm.  Very special thanks are owed to Shlomo Sternberg whose keen interest in this project and encouragement throughout proved invaluable. Finally, several people answered questions and helped clarify points while we were preparing this paper, and so we would like to thank them: Shaun Fallat, Tony Guttmann, and Aihua Xia. 

\bibliographystyle{plain}

\begin{thebibliography}{10}


\bibitem{JvdB}
J.~Boomen.
\newblock {\em The Matrix Tree Theorem}.
\newblock  Bachelor's thesis, Radboud Universiteit Nijmegen, Netherlands, 2007.

\bibitem{Cayley}
A.~Cayley.
\newblock A theorem on trees.
\newblock {\em Quart. J. Math.},  23:376--378, 1889.

\bibitem{FIS}
S.~H.~Friedberg, A.~J.~Insel, and L.~E.~Spence.
\newblock {\em Linear Algebra}.
\newblock Prentice Hall, Englewood Cliffs, NJ, 1989.

\bibitem{Grimmett}
G.~R.~Grimmett.
\newblock {\em Probability on Graphs}.
\newblock Cambridge University Press, Cambridge, UK, 2011.

\bibitem{KS}
J.~G.~Kemeny and J.~L.~Snell.
\newblock {\em Finite Markov Chains}.
\newblock Van Nostrand, Princeton, NJ, 1960.

\bibitem{Kirchhoff}
G.~Kirchhoff.
\newblock \"Uber die Aufl\"osung der Gleichungen, auf welche man bei der Untersuchung der linearen Verteilung galvanischer Str\"ome gef\"uhrt wird.
\newblock {\em Ann. Phys. Chem.}  72:497--508, 1847.

\bibitem{LawLimic}
G.~F. Lawler and V.~Limic.
\newblock {\em Random Walk: A Modern Introduction}.
\newblock Cambridge University Press, Cambridge, UK, 2010.

\bibitem{LP}
R.~Lyons (with Y.~Peres).
\newblock {\em Probability on Trees and Networks}.
\newblock Cambridge University Press, Cambridge, UK, in preparation.
\newblock  Current  version available at  {\tt http://mypage.iu.edu/$\sim$rdlyons/}.

\bibitem{Norris}
J.~R.~Norris.
\newblock  {\em Markov Chains}.
\newblock  Cambridge University Press, Cambridge, UK, 1997.

\bibitem{DWSMP}
D.~W.~Stroock.
\newblock {\em An Introduction to Markov Processes},
\newblock volume 230 of {\em Graduate Texts in Mathematics}.
Springer, Berlin \& Heidelberg, Germany, 2005.

\bibitem{Wilson}
D.~B.~Wilson, Generating random spanning trees more quickly than the cover time. In {\em Proceedings of the Twenty-eighth Annual ACM Symposium on the Theory of Computing}, pages 296--303. ACM, New York, NY, 1996.
\end{thebibliography}

\end{document}